\documentclass[11pt,a4paper,reqno]{amsart}
\usepackage{amsmath,amssymb,amsfonts,epsfig,mathrsfs,cite, hyperref}
\usepackage[T1]{fontenc}
\usepackage{color}
\usepackage{array}
\usepackage{amsthm}
\usepackage{amstext}
\usepackage{graphicx}
\usepackage{setspace}

\usepackage[title]{appendix}

\usepackage{booktabs}
\usepackage{longtable}
\usepackage{tcolorbox}

\usepackage{float}

\makeatletter
\@namedef{subjclassname@2020}{%
  \textup{2020} Mathematics Subject Classification}
\makeatother

\usepackage[margin=2.5cm]{geometry}
\usepackage{color}
\usepackage{enumitem}

\usepackage{amscd,psfrag}
\usepackage{yhmath}
\usepackage[mathscr]{eucal}
\usepackage{pdflscape}

\usepackage{comment}

\allowdisplaybreaks[4]

\usepackage{slashed}

\makeatletter
\pdfpageheight\paperheight
\pdfpagewidth\paperwidth

\usepackage{epstopdf}
\usepackage{indentfirst}	

\usepackage[normalem]{ulem}
\theoremstyle{plain}

\newtheorem{definition}{Definition}
\newtheorem{theorem}[definition]{Theorem}
\newtheorem*{theorem*}{Theorem}

\newtheorem{remark}[definition]{Remark}

\newtheorem*{remark*}{Remark}
\newtheorem*{sideremark*}{Side Remark}

\newtheorem*{claim*}{Claim}
\newtheorem*{lemma*}{Lemma}
\newtheorem*{q*}{Question}
\newtheorem{lemma}[definition]{Lemma}

\newtheorem*{corollary*}{Corollary}

\newcommand{\R}{\mathbb{R}}

\def\XXint#1#2#3{{\setbox0=\hbox{$#1{#2#3}{\int}$ }
\vcenter{\hbox{$#2#3$ }}\kern-.6\wd0}}

\author{Siran Li}

\address{Siran Li: School of Mathematical Sciences $\&$ CMA-Shanghai, Shanghai Jiao Tong University, No.~6 Science Buildings, 800 Dongchuan Road, Minhang District, Shanghai, China (200240)}

\email{\texttt{siran.li@sjtu.edu.cn}}

\author{Isaac Newell}

\address{Isaac Newell: Mathematical Institute and Hertford College, University of Oxford, Andrew Wiles Building, Radcliffe
Observatory Quarter, Woodstock Road, Oxford OX2 6GG, UK.}

\email{\texttt{isaac.newell@hertford.ox.ac.uk}}

\keywords{Geometric rigidity; isometric immersions; Sobolev immersions;
hypersurfaces in spheres; nonlinear elasticity; stretching-plus-bending energy;
nonlinear Korn inequalities}

\subjclass[2020]{Primary 53C24, 53C42; Secondary 74B20, 74K25}

\date{\today}

\title{A geometric rigidity estimate for codimension-1 immersions into spheres}

\begin{document}

\begin{abstract}
We establish a quantitative stability estimate for codimension-one immersions into a round sphere. Let $(M,g)$ be an oriented Riemannian manifold and suppose that a prescribed shape operator is realised by a smooth isometric immersion $\theta:(M,g)\to\mathbb S^{n+1}$. We prove that, on every relatively compact strongly Lipschitz domain and for every $1<p<\infty$, any Sobolev immersion $\phi$ is close to $\theta$ modulo an ambient rotation, with the $W^{1,p}$-distances between both the immersions and their Gauss maps controlled by the $L^p$ stretching-plus-bending energies of $\phi$. No {\it a priori} bounds on the fundamental forms of $\phi$ are required. The proof proceeds by extending the immersions along normal geodesics and reducing the problem to an equidimensional geometric rigidity estimate on the sphere. A finite localisation and patching argument handles the possible non-injectivity of the normal extension of the reference immersion.
\end{abstract}

\maketitle

\section{Introduction and main results}

Let $(M,g)$ be a smooth oriented Riemannian manifold of dimension $n$ and denote by $\mathbb{S}^{n+1}$ the round sphere with the canonical metric $\sigma$. A well-known rigidity theorem states that for two isometric immersions  $\theta, \phi : (M,g) \to (\mathbb{S}^{n+1},\sigma)$ with the same second fundamental form,  there exists a proper isometry $f$ of $(\mathbb{S}^{n+1},\sigma)$ such that $\phi = f \circ \theta$. This is the uniqueness part of the fundamental theorem of submanifold theory; see, e.g. \cite[\S3.2]{by_chen}. In this work, we study the stability of this property.

We first recall some terminology and notation on isometric immersions. Let $\theta: M \to \mathbb{S}^{n+1}$ be a smooth immersion. Then, for each $x \in M$, $d\theta_x$ is an injective linear map from $T_xM$ into $T_{\theta(x)}\mathbb{S}^{n+1}$. Moreover, $\theta$ is isometric (denoted as $d\theta \in O(g,\sigma)$) if and only if $d\theta_x : (T_xM, g_x) \to (T_{\theta(x)}\mathbb{S}^{n+1},\sigma_{\theta(x)})$ is an isometry for every $x \in M$. The Gauss map $\nu_\theta : M \to T\mathbb{S}^{n+1}$ is defined so that $\nu_\theta(x) \in T_{\theta(x)}\mathbb{S}^{n+1} = \langle \theta(x) \rangle^\perp \leq \mathbb{R}^{n+2}$ is the unique unit vector orthogonal to $d\theta_x(T_x M)$ for which $(d\theta_x(v_1), \cdots, d\theta_x(v_n), \nu_\theta(x), \theta(x))$ is a positive basis of $\mathbb{R}^{n+2}$ if $(v_1, \cdots, v_n)$ is a positive basis of $T_x M$. The shape operator $S_\theta : TM \to TM$ is defined by
\begin{equation*}
    \nabla_X \nu_\theta = - (d\theta \circ S_\theta)(X), \quad X \in \Gamma(TM)
\end{equation*}
where $\nabla$ is the usual connection on $\mathbb{R}^{n+2}$. (This is well-defined because $\nabla_X\nu_\theta$ is orthogonal to $\theta$ and to $\nu_\theta$ in $\mathbb{R}^{n+2}$, and hence belongs to the range of $d\theta$.) 

Suppose now that $(M,g)$ is equipped with a reference shape operator $S$, which is a smooth, self-adjoint $(1,1)$-tensor field. Let $p \in (1,\infty)$ and suppose that $\omega \subset\subset M$ is a strongly Lipschitz domain compactly contained in $M$ (\emph{cf}. Definition~\ref{def:lip_mfd}). In this work, we consider the following \textit{stretching-plus-bending} energy of an elastic deformation $\theta$,  studied in Alpern--Kupferman--Maor~\cite{akm22_resh_codim1, akm24_stability} and inspired by similar energies appearing in physics literature such as \cite{kes07_shaping, gsd16_nanoribbons, gv11_shape_selection, dhs11_buckling}:
\begin{align*}
E_p(\theta;\omega) &:= \int_\omega \operatorname{dist}^p(d\theta, O(g, \sigma)) dvol_g + \int_\omega |d\theta \circ (S_\theta - S)|_{g,\sigma}^p dvol_g\\
&= E_p^S(\theta; \omega) + E_p^B(\theta;\omega). \nonumber
\end{align*}
Here, the \emph{stretching} energy $E_p^S$ measures how far $\theta$ deviates from being isometric ($\theta$ is isometric if and only if $d\theta \in O(g,\sigma)$). The \emph{bending} energy $E_p^B$ measures how far $S_\theta$ deviates from the prescribed shape operator $S$. Thus, an immersion $\theta :(\omega,g) \to (\mathbb{S}^{n+1}, \sigma)$ has $E_p(\theta;\omega) = 0$ if and only if $\theta$ is isometric and $S_\theta = S$.

 \begin{remark}
     In the elasticity literature, a central problem is the derivation of dimensionally-reduced theories from nonlinear bulk elasticity. Consider the following formulation for thin shells. Let $S \subset \R^3$ be a compact, embedded surface with unit normal $n$, and let
     \begin{equation*}
         S^h := \{x+t n(x) : x \in S, -\frac h 2 < t < \frac h 2\}.
     \end{equation*}
     For deformations $u \in W^{1,2}(S^h; \R^3)$, the elastic energy per unit thickness is
  \begin{equation*}
         E^h(u) := \frac1h\int_{S^h} W(\nabla u),
     \end{equation*}
     where the stored energy density function $W : \R^{3 \times 3} \to [0,\infty]$ satisfies certain natural assumptions. One then seeks to identify the $\Gamma$-limit $I_\beta$ of the rescaled energies $h^{-\beta} E^h$ as $h \searrow 0$. Different limiting theories arise for different values of $\beta$; see \cite{fjm, fjm2, fjm_mora_03, leDret_raoult_95, leDret_raoult_96, lewicka_pakzad_11, lewicka_mahadevan_pakzad_11, lewicka_mora_pakzad_09, lewicka_mora_pakzad_10, lewicka_mora_pakzad_11, lewicka_mahadevan_pakzad_17, hornung_lewicka_pakzad_13, conti_etal_02, conti_maggi}. In many such cases, $I_\beta$ contains terms which quantify stretching and/or bending in some way, as does our energy $E_p$.
 \end{remark}

The natural function space for studying the energy $E_p$ is the space of $p$-Sobolev immersions, which pertains to elastic bodies of weak regularity:
\begin{multline*}
    I_p(\omega; \mathbb{S}^{n+1}) := \Big\{ \theta \in W^{1,p}(\omega; \mathbb{R}^{n+2}) : |\theta| = 1,~ \mathrm{rank}( d\theta) = n \text{ a.e., }
    \nu_\theta \in W^{1,p}(\omega; \mathbb{R}^{n+2}) \Big\}.
\end{multline*}

The main result of our note is as follows.
\begin{theorem} \label{thm:rig_main_codim1}
Let $(M,g)$ be a smooth oriented $n$-dimensional Riemannian manifold without boundary, $p \in (1,\infty)$, and $\omega \subset\subset M$ be a strongly Lipschitz domain compactly contained in $M$. Assume that $S$ is a smooth, self-adjoint $(1,1)$-tensor field on $M$ and that there exists a smooth isometric immersion $\theta : (M,g) \to \mathbb{S}^{n+1}$ having shape operator $S$.\footnote{Recall that if $M$ is simply connected, the existence of $\theta$ is equivalent to the condition that $(g,S)$ satisfy the Gauss-Codazzi equations.} Then, there exists a constant $C$ depending only on $\theta$, $(M,g)$, $S$, $p$, and $n$, such that for any immersion $\phi \in I_p(\omega;\mathbb{S}^{n+1})$, one can find $Q \in SO(n+2)$ satisfying 
    \begin{equation} \label{eq:rig_main_codim1}
        \|\phi - Q\theta\|_{W^{1,p}(\omega,g; \mathbb{R}^{n+2})}^p  + \| \nu_\phi - Q\nu_\theta\|_{W^{1,p}(\omega, g;\mathbb{R}^{n+2})}^p \leq C E_p(\phi; \omega).
    \end{equation}
\end{theorem}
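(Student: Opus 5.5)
We reduce to an equidimensional rigidity problem on $\mathbb{S}^{n+1}$ by thickening along normal geodesics, as the abstract suggests.

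\textbf{Normal extension.} For $\delta>0$ small, define on $\Omega:=\omega\times(-\delta,\delta)$ the extensions
\[
\Theta(x,t)=\cos t\,\theta(x)+\sin t\,\nu_\theta(x), \qquad \Phi(x,t)=\cos t\,\phi(x)+\sin t\,\nu_\phi(x).
\]
We equip $\Omega$ with the smooth metric $G:=\Theta^*\sigma$. A direct computation gives
\[
dG=dt^2+g\bigl((\cos t\,\mathrm{Id}-\sin t\,S)\cdot,(\cos t\,\mathrm{Id}-\sin t\,S)\cdot\bigr).
\]
This is nondegenerate for $\delta<\delta_0(\|S\|_\infty)$ on $\bar\omega$, since $\omega$ is relatively compact.

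Similarly,
\[
d\Phi=\cos t\,d\phi-\sin t\,d\phi\circ S_\phi \text{ on } TM, \qquad d\Phi(\partial_t)=-\sin t\,\phi+\cos t\,\nu_\phi .
\]
The latter is a unit vector orthogonal to $d\phi(TM)$ and to $\Phi$. Writing
\[
d\Phi=\cos t\,d\phi-\sin t\,d\phi\circ S-\sin t\,d\phi\circ(S_\phi-S),
\]
one checks pointwise that
\[
\operatorname{dist}(d\Phi,O(G,\sigma))\lesssim \operatorname{dist}(d\phi,O(g,\sigma))+|d\phi\circ(S_\phi-S)|.
\]
The idea is that if $d\phi=R\circ\iota$ with $R$ an isometry onto its image, the completed frame is orientation-preserving by the definition of $\nu_\phi$. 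Integrating in $t$ then yields
\[
\int_\Omega \operatorname{dist}^p(d\Phi,SO(G,\sigma))\,dvol_G\le C\,E_p(\phi;\omega).
\]

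\textbf{Equidimensional rigidity.} Next we invoke a geometric rigidity estimate for maps between equidimensional Riemannian manifolds, in the spirit of Friesecke--James--Müller and of Kupferman--Maor--Shachar. The target is $\mathbb{S}^{n+1}$, and $\Theta$ is a local isometry. If $\Theta$ is injective on $\Omega$, then $\Theta(\Omega)$ is a Lipschitz domain in the sphere. We pull back by $\Theta^{-1}$ to get $\Psi:=\Phi\circ\Theta^{-1}$ defined on that domain. For maps $U\subset\mathbb{S}^{n+1}\to\mathbb{S}^{n+1}\subset\mathbb{R}^{n+2}$, rigidity follows by composing with the radial extension to a shell in $\mathbb{R}^{n+2}$, namely $r\xi\mapsto r\Psi(\xi)$, and applying FJM there. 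This gives $Q\in SO(n+2)$ with
\[
\|\Phi-Q\Theta\|^p_{W^{1,p}(\Omega)}\le C\int_\Omega\operatorname{dist}^p(d\Phi,SO).
\]
This uses Poincaré on the Lipschitz domain; we need $\Omega$ to be strongly Lipschitz, which follows from $\omega$ being so.

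Finally, restricting to $t=0$ gives the estimate for $\phi-Q\theta$. Differentiating in $t$ at $0$ gives the estimate for $\nu_\phi-Q\nu_\theta$, using $\partial_t\Phi|_{t=0}=\nu_\phi$. The $W^{1,p}$ traces on slices are not directly controlled by bulk norms. To handle this, we use the explicit trigonometric $t$-dependence: $\Phi-Q\Theta=\cos t\,(\phi-Q\theta)+\sin t\,(\nu_\phi-Q\nu_\theta)$. Integrating $|\cdot|^p$ and $|d_x\cdot|^p$ over $t\in(-\delta,\delta)$ controls both coefficient functions, by equivalence of norms on the two-dimensional span of $\{\cos,\sin\}$ in $L^p(-\delta,\delta)$.

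\textbf{Non-injectivity (main obstacle).} In general $\Theta$ is only a local diffeomorphism on $\Omega$. We plan to cover $\bar\omega$ by finitely many balls $B_i$ such that $\Theta$ is injective on $(B_i\cap\omega)\times(-\delta,\delta)$, with each $B_i\cap\omega$ Lipschitz. This gives rotations $Q_i$ with the estimate on each piece. Next we form a chain of overlaps using connectedness of $\omega$ and fix a reference index. On an overlap $U$ of positive measure, $\|(Q_i-Q_j)\Theta\|_{L^p(U\times(-\delta,\delta))}$ is bounded by the sum of the two local errors. Since $\Theta$ is an immersion, its image over an open set spans $\mathbb{R}^{n+2}$ in a quantitative way, so $|Q_i-Q_j|^p\le C_U\,E_p$. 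Propagating along the finite chain replaces all $Q_i$ by a single $Q$ at the cost of a constant. Summing over the cover gives \eqref{eq:rig_main_codim1}. The delicate points are choosing the cover so that each piece is strongly Lipschitz and the overlaps are nondegenerate, and keeping the constants dependent only on $\theta$, $g$, $S$, $p$, $n$ (and $\omega$).
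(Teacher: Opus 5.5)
Your proposal is correct and follows essentially the same route as the paper: thicken along normal geodesics, and bound $\operatorname{dist}(d\Phi,SO(\Theta^*\sigma,\sigma))$ pointwise by the stretching-plus-bending density via the frame $\bigl(O(d\phi)\circ(\cos t\,\mathrm{id}-\sin t\,S)\bigr)\oplus(-\sin t\,\phi+\cos t\,\nu_\phi)$; then apply spherical rigidity on finitely many pieces where $\Theta$ is injective, patch the rotations through overlaps (the $\Theta$-image of an open set spans $\mathbb{R}^{n+2}$), and recover the codimension-one norms from the explicit $\cos t$, $\sin t$ dependence. The differences are minor: where the paper cites Kupferman--Maor, you derive spherical rigidity from Euclidean Friesecke--James--M\"uller via the radial extension $\tilde\Psi(r\xi):=r\Psi(\xi)$, which is valid since $\operatorname{dist}(d\tilde\Psi,SO(n+2))\le\operatorname{dist}(d\Psi,SO(\sigma,\sigma))$ and the radial component of $d\tilde\Psi-Q$ is exactly $\Psi(\xi)-Q\xi$ (so no Poincar\'e step is needed); and at boundary points you should use coordinate cylinders adapted to the Lipschitz graph, as in the proof of Lemma~\ref{lem:cover}, rather than balls, since $B_i\cap\omega$ can fail to be Lipschitz or even connected.
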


Immersions of elastic bodies into non-flat targets have attracted much attention in the physics community; see, \emph{e.g.}, \cite{aharoni2016internal}. The analogue of Theorem \ref{thm:rig_main_codim1} for codimension-1 immersions into Euclidean spaces has been recently established in Alpern--Kupferman--Maor~\cite[Theorem A.1]{akm24_stability}. Theorem \ref{thm:rig_main_codim1} is, to our knowledge, the first such \textit{quantitative} rigidity result into a non-Euclidean target. In the  nice preprint~\cite[Theorem 3.1]{bastug_quant_local}, Ba\c{s}tu\v{g} obtained a similar estimate for maps from a cube in $\mathbb{R}^n$ to a compact, oriented $(n+1)$-dimensional manifold $N$. The target $N$ in \cite{bastug_quant_local} is fairly general; nevertheless, in comparison with~\eqref{eq:rig_main_codim1}, the estimate in \cite{bastug_quant_local} contains extra terms on the right-hand side, including the $L^p$-norm of $d\phi$ and the oscillation of $g$. Various quantitative rigidity results for surfaces in $\mathbb{R}^3$ have also been established by P. Ciarlet and collaborators in \cite{ciarlet_03, ciarlet_malin_cmardare_19, ciarlet_malin_cmardare_20, ciarlet_cmardare_19}, assuming certain \textit{a priori} bounds on the first and second fundamental forms. Malin--C. Mardare~\cite{cm_korn_hypersurf_18} also proved rigidity results for hypersurfaces in $\mathbb{R}^{n+1}$ with a slightly different stretching-plus-bending energy.

On the other hand, \textit{asymptotic}, rather than quantitative, rigidity results for the energy $E_p$ are already known for more general target spaces~\cite{akm22_resh_codim1, bastug_quant_local, bastug_asymptotic_noncompact, km25_linearization}. In particular, Alpern--Kupferman--Maor~\cite{akm24_stability} established the following result: Let $(M,g)$, $S$, and $p$ be as in Theorem \ref{thm:rig_main_codim1} and let $(N,h)$ be a connected, oriented $(n+1)$-dimensional manifold without boundary. Suppose that $(\theta_k) \subset I_p(M;N)$ is a sequence of immersions for which $E_p(\theta_k) \to 0$. Then there exists a smooth isometric immersion $\theta : (M,g) \to (N,h)$ with shape operator $S$ such that $\theta_k \to \theta$ in $W^{1,p}(M;N)$ and $\nu_{\theta_k} \to \nu_{\theta}$ in $W^{1,p}(M;TN)$.\footnote{We omit here the precise definitions of $I_p(M;N)$, $W^{1,p}(M;TN)$, and $E_p$ in the more general setting. See~\cite{akm24_stability} for details.} Hence,  $\inf_{I_p(M;N)} {E_p}$ is zero if and only if there exists a smooth map $\theta$ with $E_p(\theta) = 0$.

The main novelty of our work, especially in comparison with \cite{akm24_stability}, is twofold:
\begin{enumerate}
    \item We treat the non-Euclidean target $\mathbb{S}^{n+1}$ by extending the argument in \cite{akm24_stability}.
    \item We handle the possible non-injectivity of (the extension along normal geodesics in $\mathbb{S}^{n+1}$ of) $\theta$. 
\end{enumerate}

We shall deduce Theorem \ref{thm:rig_main_codim1} from Theorem~\ref{thm:rig_main_codim0}, a codimension-0 rigidity estimate for elastic deformations. It can be regarded as a variant of the \textit{nonlinear Korn inequalities}, which bound the distance between two immersions from $\Omega \subset \mathbb{R}^n$ to $\mathbb{R}^n$ (modulo Euclidean rotations) by the distance between their Cauchy-Green tensors; see, \emph{e.g.}, \cite{ciarlet_cmardare_04, cm_nki_2015, cm_nki_W2p_2016}.

\begin{theorem} \label{thm:rig_main_codim0}
Let $\mathcal{M}$ be an oriented smooth $n$-dimensional manifold without
boundary, $p\in(1,\infty)$, and $\Omega\subset\subset\mathcal{M}$ a strongly Lipschitz domain compactly contained in $\mathcal{M}$. Suppose that
$\Theta\in C^1(\mathcal{M};\mathbb{S}^n)$ is an orientation-preserving immersion. Then there is a constant $C=C(\Theta,\Omega,p,n)<\infty$ such that for every $\Phi\in W^{1,p}(\Omega;\mathbb{S}^n)$, there exists a rotation $Q \in SO(n+1)$ satisfying
\begin{equation} \label{eq:rig_main_codim0}
    \|\Phi-Q\Theta\|_{W^{1,p}(\Omega, \Theta^*e;\mathbb{R}^{n+1})} \leq C\|\operatorname{dist}(d\Phi, SO(\Theta^*\sigma,\sigma))\|_{L^p(\Omega, \Theta^*e)}.
\end{equation}
\end{theorem}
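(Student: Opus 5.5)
We plan to prove Theorem~\ref{thm:rig_main_codim0} by localising to small patches where $\Theta$ is a diffeomorphism onto its image. On each patch we reduce to a Euclidean-type rigidity estimate by working in the cone over the sphere. The local rotations are then patched together using the connectedness of $\Omega$.

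\textbf{Step 1 (local estimate).} Since $\Theta$ is a $C^1$ immersion and $\overline\Omega$ is compact, we cover $\overline\Omega$ by finitely many balls $B_1,\dots,B_N$, for example geodesic balls for $\Theta^*\sigma$ intersected with $\Omega$. We choose each $B_i\cap\Omega$ to be a Lipschitz domain and each $\Theta|_{B_i}$ to be a diffeomorphism onto a small spherical cap $U_i$. Via $\Theta$ we then regard $\Phi\circ\Theta^{-1}$ as a map $U_i\subset\mathbb S^n\to\mathbb S^n$, and the claim becomes the following. For $u\in W^{1,p}(U;\mathbb S^n)$ there is $Q\in SO(n+1)$ with
\[
\|u-Q\,\mathrm{id}\|_{W^{1,p}(U)}\le C\|\operatorname{dist}(du,SO(\sigma,\sigma))\|_{L^p(U)}.
\]
To prove this, extend $u$ radially to $\tilde u(r y)=r\,u(y)$ on the cone $\{ry: y\in U,\ r\in(1/2,1)\}\subset\mathbb R^{n+1}$. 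The differential is $d\tilde u=u\otimes y + du\circ P_{y^\perp}$ in the splitting of $\mathbb R^{n+1}$ into radial and tangential directions. Since $u\perp du$, we get
\[
\operatorname{dist}(d\tilde u,SO(n+1))\lesssim \operatorname{dist}(du,SO(\sigma,\sigma))
\]
pointwise. Here orientation is preserved by the convention that $(du\,e_1,\dots,du\,e_n,u)$ is positively oriented. The Friesecke--James--M\"uller rigidity estimate on the Lipschitz cone then gives $Q$ with $\|d\tilde u-Q\|_{L^p}\lesssim \|\operatorname{dist}(d\tilde u,SO)\|_{L^p}$. Restricting to radial and tangential components and integrating in $r$, this yields $\|du-Q|_{TU}\|_{L^p}$ and $\|u-Q\,\mathrm{id}\|_{L^p}$. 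For the latter, note that the radial component of $d\tilde u - Q$ is exactly $u(y)-Qy$. The constants depend only on the patch geometry and on the $C^1$ bounds of $\Theta$, $\Theta^{-1}$.

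\textbf{Step 2 (patching).} This is the main obstacle, because $\Theta$ need not be injective globally, so the rotations $Q_i$ are defined only patch by patch. Using connectedness of $\Omega$, for the finitely many overlapping pairs $B_i\cap B_j\cap\Omega$ of positive measure we compare
\[
\|Q_i\Theta-Q_j\Theta\|_{L^p(B_i\cap B_j)}\le \|\Phi-Q_i\Theta\|+\|\Phi-Q_j\Theta\| .
\]
The left side controls $|Q_i-Q_j|$ up to a constant, because $\Theta(B_i\cap B_j)$ is an open subset of $\mathbb S^n$. That set spans $\mathbb R^{n+1}$, so $Q\mapsto \|Q\Theta\|_{L^p}$ is a norm on matrices there. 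Chaining along a path in the (connected) overlap graph from a fixed $Q_1$ bounds every $|Q_i-Q_1|$ by $C\|\operatorname{dist}(d\Phi,SO)\|_{L^p(\Omega)}$. Summing the local estimates with $Q=Q_1$ then gives \eqref{eq:rig_main_codim0}, with norms measured in $\Theta^*e$, which is comparable to $\Theta^*\sigma$.

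If $\Omega$ is disconnected, it has finitely many components, and we would need one rotation per component. The statement presumably assumes that a domain is connected, and we adopt that convention. The delicate points are choosing the cover so that the overlap graph is connected and each patch is Lipschitz; both follow from the strong Lipschitz assumption via a finite cover by bi-Lipschitz charts.
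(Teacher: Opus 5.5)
Your proof is correct, and its overall architecture is the same as the paper's. Both use a finite cover of $\overline\Omega$ by patches on which $\Theta$ is a diffeomorphism, followed by an isometric change of variables. Since $\Theta^*e=\Theta^*\sigma$ exactly, no $C^1$ bounds on $\Theta^{-1}$ enter the constants. Both then prove a local rigidity estimate on each $\Theta(U_i)\subset\mathbb{S}^n$, and chain the $Q_i$ to $Q_1$ along the connected overlap graph, using that an open subset of $\mathbb{S}^n$ spans $\mathbb{R}^{n+1}$.

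The genuine difference is the local estimate. The paper simply cites the spherical geometric rigidity theorem of Kupferman--Maor. You instead derive it from the Euclidean $L^p$ Friesecke--James--M\"uller estimate via the $0$-homogeneous cone extension $\tilde u(ry)=ru(y)$. This derivation is sound:
\begin{itemize}
\item[(a)] $d\tilde u_{ry}=u(y)\otimes y+du_y\circ P_{y^\perp}$.
\item[(b)] For $R\in SO(\sigma,\sigma)$ at $y$, the map $\tilde R:=u(y)\otimes y+R\circ P_{y^\perp}$ lies in $SO(n+1)$ and satisfies $|d\tilde u_{ry}-\tilde R|=|du_y-R|$.
\item[(c)] $|d\tilde u_{ry}-Q|^2=|u(y)-Qy|^2+|d(u-Q\iota)_y|^2$. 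So the radial direction gives you the zeroth-order term directly, with no translation vector and no Poincar\'e step, and integrating in $r\in(1/2,1)$ costs only a dimensional factor.
\end{itemize}
Your route is self-contained and shows why the spherical case is no harder than the Euclidean one; the paper's citation is simply shorter.

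Two details need tidying. First, you should state that the truncated cone is Lipschitz: it is the image of $\Theta(U_i)\times(1/2,1)$ under the diffeomorphism $(y,r)\mapsto ry$, so Lemma~\ref{lemma:C1diffeo_takes_lip_to_lip} applies. Second, a geodesic ball intersected with $\Omega$ need not be Lipschitz, since a ball tangent to $\partial\Omega$ can produce cusps. Near $\partial\Omega$ you should instead use cylinders adapted to the local Lipschitz graph, as in Lemma~\ref{lem:cover}; this is what your final sentence gestures at.
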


As with \cite{ciarlet_cmardare_04, cm_nki_2015}, our proof of Theorem~\ref{thm:rig_main_codim0} is based on the landmark \textit{geometric rigidity lemma} of Friesecke--James--M\"uller~\cite{fjm}, which enables the rigorous derivation of dimensionally-reduced theories in nonlinear elasticity via Gamma convergence; see \cite{fjm_mora_03, fjm, fjm2, lewicka_pakzad_11, lewicka_mora_pakzad_10}. We use a spherical variant of that geometric rigidity estimate proved recently in Kupferman--Maor~\cite{km25_linearization};  see also \cite{chen_li_slemrod_22, cdm} for some Riemannian generalisations. Another key point in the proof of Theorem \ref{thm:rig_main_codim0} is to cover the domain $\Omega$ by finitely many domains on which $\Theta$ is injective. Then, we can apply the geometric rigidity estimate on each domain and perform a change of variables on each via $\Theta$. %\sout{To `patch together' the resulting local inequalities and recover Theorem} \ref{thm:rig_main_codim0}, \sout{we follow a procedure inspired by Ciarlet and C. Mardare} \cite{ciarlet_cmardare_04}. 
Finally, we argue that the resulting local inequalities can be patched together to recover Theorem \ref{thm:rig_main_codim0}.

For the passage from Theorem \ref{thm:rig_main_codim0} to Theorem \ref{thm:rig_main_codim1}, we follow a well-known strategy to extend the given codimension-1 immersions $\theta, \phi : \omega \subset M \to \mathbb{S}^{n+1}$ to $\Omega := \omega \times (-h,h)$ for some small $h > 0$ by `thickening' along normal geodesics:
\begin{equation*}
    \Theta(x,t) := \operatorname{exp}^{\mathbb{S}^{n+1}}_{\theta(x)}(t\nu_\theta(x)) = \cos t\ \theta(x) + \sin t\ \nu_\theta(x)
\end{equation*}
with $\Phi$ extending $\phi$ similarly. We then apply the codimension-0 rigidity Theorem~\ref{thm:rig_main_codim0} to $\Theta$ and $\Phi$, which yields \eqref{eq:rig_main_codim0}. The remaining challenge is to estimate the left-hand side from below and the right-hand side from above, by correct quantities depending only on the original maps $\theta, \phi$. This argument is similar to that used in various papers by the Ciarlet group \cite{ciarlet_03, ciarlet_malin_cmardare_19, ciarlet_malin_cmardare_20, ciarlet_cmardare_19, cm_korn_hypersurf_18} and in the Euclidean counterpart of Theorem \ref{thm:rig_main_codim1} from \cite{akm24_stability}. However, the non-Euclidean ambient space forces us to make some adaptations.

The remainder of this note is organised as follows. In \S \ref{sec:prelims_GeomRig}, we recall some preliminaries. In \S \ref{sec:codim0_proof} we prove Theorem \ref{thm:rig_main_codim0}. Finally, in \S \ref{sec:codim1_proof} we prove our main Theorem \ref{thm:rig_main_codim1}.

\section{Preliminaries} \label{sec:prelims_GeomRig}
We briefly introduce some notation and recall some relevant facts about linear algebra and about Lipschitz domains on Riemannian manifolds. 

\subsection{Notation}
We will always write $\sigma$ for the canonical metric on $\mathbb{S}^n$, and $e$ for the Euclidean metric on $\mathbb{R}^n$. For $1 \leq p < \infty$ and a compact manifold $M$ (possibly with boundary) with a Riemannian metric $g$, we will write $\|\cdot\|_{L^p(M;g)}$ for the $L^p$ norm with respect to $g$:
\begin{equation*}
    \|u\|_{L^p(M,g)} := \Big(\int_M |u(x)|^p dvol_g(x) \Big)^{1/p}
\end{equation*}
and $\|\cdot\|_{W^{1,p}(M,g)}$ for the $W^{1,p}$ norm with respect to $g$:
\begin{equation*}
    \|u\|_{W^{1,p}(M,g)} := \Big( \int_M |u(x)|^pdvol_g(x) + \int_M |du_x|_{g,e}^p dvol_g(x) \Big)^{1/p}.
\end{equation*}
Since $M$ is compact, the $L^p$ and $W^{1,p}$ norms induced by any two continuous metrics $g$, $\tilde g$ are equivalent.

\subsection{Linear algebra}
Let $V$ and $W$ be finite dimensional real inner product spaces with inner products $g$ and $h$ and dimensions $n = \dim V$, $m = \dim W$. We denote by $\operatorname{Hom}(V,W)$ the space of linear maps from $V$ to $W$ and write $O(V,W)$ for the set of linear isometries from $V$ to $W$, which is nonempty if and only if $n \leq m$. If furthermore $V$ and $W$ are oriented and $n=m$, then we denote by $SO(V,W)$ the set of orientation-preserving linear isometries from $V$ to $W$. The inner products $g$, $h$ induce an inner product on $\operatorname{Hom}(V,W)$. The resulting norm of a map $T \in \operatorname{Hom}(V,W)$ is given by
\begin{equation*}
    |T|_{g,h} := \Big( \sum_{i=1}^n |Tv_i|_h^2 \Big)^{1/2}
\end{equation*}
for any choice of orthonormal basis $(v_i)_{i=1}^n$ of $(V,g)$; this is well-defined and independent of the choice of such basis. We refer to $|\cdot|_{g,h}$ as the Frobenius norm. Moreover, for a compact set $K \subset \operatorname{Hom}(V,W)$ (typically $O(V,W)$ or $SO(V,W)$) we will write
\begin{equation*}
    \operatorname{dist}(T,K) := \min_{Q \in K} |T-Q|_{g,h}.
\end{equation*}
Recall that if $n \leq m$ and $T \in \operatorname{Hom}(V,W)$, then $T$ can be factorised as
\begin{equation*}
    T = QS
\end{equation*}
where $Q \in O(V,W)$ and $S \in \operatorname{Hom}(V)$ is self-adjoint with non-negative eigenvalues. If $T$ is injective, then this factorisation is unique, and given by
\begin{equation*}
    Q = T(T^*T)^{-\frac12}, \quad S = (T^*T)^\frac12.
\end{equation*}
Here, $T^* \in \operatorname{Hom}(W,V)$ denotes the adjoint map satisfying $\langle Tv, w\rangle_h = \langle v, T^*w\rangle_g$ for all $v \in V$, $w \in W$, and $(T^*T)^\frac12$ is the unique self-adjoint map $M$ on $V$ with positive eigenvalues satisfying $M^2 = T^*T$. Moreover, there holds
\begin{equation} \label{eq:closest_O}
    \min_{R \in O(V,W)} |T-R|_{g,h} = |T-Q|_{g,h} = |I_V-S|_{g,g},
\end{equation}
so $Q$ is the closest element to $T$ in $O(V,W)$. In particular, if $n=m$ and $V$ and $W$ are oriented, then for any injective orientation-preserving map $T \in \operatorname{Hom}(V,W)$, $Q := T(T^*T)^{-\frac12}$ belongs to $SO(V,W)$ and is the closest such element to $T$.

In our applications of interest, the oriented inner product spaces at hand will be the tangent spaces of Riemannian manifolds. Suppose that $(M,g)$ and $(N,h)$ are compact Riemannian manifolds (with $M$ possibly having a boundary). Choose a smooth isometric embedding $\iota : (N,h) \hookrightarrow \mathbb{R}^D$ via the Nash embedding theorem. Then, considering the Sobolev space $W^{1,p}(M;N)$ extrinsically, i.e. as the space of maps $u \in W^{1,p}(M;\mathbb{R}^D)$ such that $\iota \circ u \in W^{1,p}(M;\mathbb{R}^D)$, the differential of $\iota \circ u$ is for almost every $x \in M$ a linear map from $T_xM$ into $T_{u(x)}N$. We shall write $\operatorname{dist}(du, O(g,h))$ to denote the measurable function on $M$ given at a.e. $x$ by
\begin{equation*}
    \operatorname{dist}(du_x, O((T_xM, g_x), (T_{u(x)}N, h_{u(x)}))).
\end{equation*}

\subsection{Lipschitz domains on manifolds}

\begin{definition} \label{def:lip_Rn}
    Let $\Omega \subset \mathbb{R}^n$ be a connected open set. We say that $\Omega$ is a \emph{strongly Lipschitz domain near} a point $x_0 \in \partial\Omega$ if there exists an affine $(n-1)$-dimensional hyperplane $H$ with unit normal $\nu$, numbers $r,h > 0$, and an open cylinder 
    \begin{equation} \label{eq:def_Crh}
        C_{r,h} := \{x' + t\nu : x' \in H, |x'-x_0|< r, |t|<h\}
    \end{equation}
    such that
    \begin{equation}
        C_{r,h} \cap \Omega = C_{r,h} \cap \{x' +t\nu : x' \in H, t > \gamma(x')\}
    \end{equation}
    for some Lipschitz function $\gamma : H \to \mathbb{R}$ satisfying
    \begin{equation}
        \gamma(x_0) = 0 \quad \text{and} \quad |\gamma(x')| < h \text{ for all } |x'|\leq r.
    \end{equation}
    Furthermore, we say that $\Omega$ is a \emph{locally strongly Lipschitz domain} if it is a strongly Lipschitz domain near every point $x_0 \in \partial\Omega$. A locally strongly Lipschitz domain with compact boundary is simply called a \emph {strongly Lipschitz domain}.
\end{definition}

\begin{definition} \label{def:lip_mfd}
    Let $\mathcal{M}$ be an $n$-dimensional manifold (without boundary) with a $C^1$ atlas $\mathcal{A}$. We say that a connected open set $\Omega \subset \mathcal{M}$ is a \emph{locally strongly Lipschitz domain relative to} $\mathcal{A}$ if for every $x_0 \in \partial\Omega$ there is a local chart $(U,\varphi) \in \mathcal{A}$ with $x_0 \in U$ such that $\varphi(U \cap \Omega) \subset \mathbb{R}^n$ is a locally strongly Lipschitz domain near $\varphi(x_0)$.
\end{definition}

We are chiefly interested in the setting where $\mathcal{M}$ is a smooth manifold equipped with a chosen smooth structure and $\mathcal{A}$ is the compatible maximal $C^1$ atlas. Hence, we shall not explicitly mention the atlas $\mathcal{A}$ in the sequel. We shall also restrict our attention to the case that the closure of $\Omega$ in $\mathcal{M}$ is compact, thus excluding the possibility of $\Omega$ `going off the edge' of the ambient manifold $\mathcal{M}$, which we do not assume to be complete.  By \cite[Corollary 4.2]{HMT}, Definitions~\ref{def:lip_Rn} and~\ref{def:lip_mfd} are consistent when $\mathcal{M} = \mathbb{R}^n$. We also have the following lemma; see \cite[Theorem 4.1]{HMT} for the Euclidean case.
\begin{lemma} \label{lemma:C1diffeo_takes_lip_to_lip}
    Let $\mathcal{M}$ and $\mathcal{N}$ be smooth $n$-dimensional manifolds. Suppose that $\mathcal{O} \subset \mathcal{M}$ is an open set and that $F : \mathcal{O} \to \mathcal{N}$ is a $C^1$ diffeomorphism onto its image. If $\Omega \subset \mathcal{M}$ is a strongly Lipschitz domain with compact closure contained in $\mathcal{O}$, then $F(\Omega)$ is a strongly Lipschitz domain in $\mathcal{N}$.
\end{lemma}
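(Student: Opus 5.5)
The plan is to reduce everything to the Euclidean statement \cite[Theorem 4.1]{HMT} by working in charts. Fix $y_0 \in \partial F(\Omega)$. Since $\overline{\Omega}$ is compact and contained in $\mathcal{O}$, and $F$ is a homeomorphism of $\mathcal{O}$ onto the open set $F(\mathcal{O})$, we get $F(\overline{\Omega}) = \overline{F(\Omega)}$. This set is compact, hence closed in $\mathcal{N}$. It follows that $\partial F(\Omega) = F(\partial\Omega)$, so $y_0 = F(x_0)$ for a unique $x_0 \in \partial\Omega$. Connectedness of $F(\Omega)$ and compactness of its closure are immediate.

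By Definition~\ref{def:lip_mfd}, there is a chart $(U,\varphi)$ at $x_0$ such that $\varphi(U\cap\Omega)$ is strongly Lipschitz near $\varphi(x_0)$. Shrinking, I will take $U \subset \mathcal{O}$. I also want $\varphi(U)$ to be a small ball around $\varphi(x_0)$, contained in the cylinder $C_{r,h}$ from Definition~\ref{def:lip_Rn}.

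Next I choose a chart $(V,\psi)$ at $y_0$ with $V \subset F(U)$. Set $W := F^{-1}(V)$, which is an open neighbourhood of $x_0$ inside $U$. The transition map
\[
G := \psi\circ F\circ\varphi^{-1} : \varphi(W) \to \psi(V)
\]
is a $C^1$ diffeomorphism between open subsets of $\mathbb{R}^n$. Moreover,
\[
\psi(V\cap F(\Omega)) = G\bigl(\varphi(W\cap\Omega)\bigr).
\]

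The main point is to check that the local graph description survives the restriction to $W$ and the application of $G$. First, $\varphi(W\cap\Omega)$ is still strongly Lipschitz near $\varphi(x_0)$: I shrink the cylinder to $C_{r',h'}$ with $C_{r',h'} \subset \varphi(W)$. Choosing $r'$ small relative to $h'$ and the Lipschitz constant of $\gamma$ keeps $|\gamma| < h'$ on the smaller base.

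Then I apply the local version of \cite[Theorem 4.1]{HMT}. It says that a $C^1$ diffeomorphism maps a set that is strongly Lipschitz near a point to a set that is strongly Lipschitz near the image point. Concretely, one takes the new hyperplane normal to $dG_{\varphi(x_0)}\nu$ and solves for the graph function by the implicit function theorem, which is applicable because the image of the cone condition under a $C^1$ map near a point is again a cone condition. If the cited theorem is only stated for global domains, I would localise by intersecting with a small ball. Its proof is local, and the cylinder characterisation only involves a neighbourhood of the point.

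A small technical issue is that the localised set need not be connected, so I only need the "near a point" notion there. This is exactly what Definition~\ref{def:lip_mfd} asks for, with the chart $(V,\psi)$. Since $y_0$ was arbitrary, $F(\Omega)$ is a strongly Lipschitz domain in $\mathcal{N}$.
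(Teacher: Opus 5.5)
Your proposal is correct and follows the route the paper intends. The paper gives no separate argument, only the pointer to the Euclidean result \cite[Theorem 4.1]{HMT}, and your reduction is that route: you identify $\partial F(\Omega)=F(\partial\Omega)$, shrink the chart and cylinder, and apply the local form of that theorem to $G=\psi\circ F\circ\varphi^{-1}$. As a side remark, Definition~\ref{def:lip_mfd} is taken relative to the maximal $C^1$ atlas, so you could bypass HMT entirely by using $\psi:=\varphi\circ (F|_U)^{-1}$ on $F(U)$ as the chart at $y_0$, which gives $\psi\bigl(F(U)\cap F(\Omega)\bigr)=\varphi(U\cap\Omega)$ directly; the Euclidean theorem is really what makes the definition chart-independent.
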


\section{Proof of Theorem \ref{thm:rig_main_codim0}} \label{sec:codim0_proof}

In this section, we prove the codimension-0 rigidity estimate of the nonlinear Korn type, namely Theorem~\ref{thm:rig_main_codim0}. For this purpose, we first decompose the domain $\Omega$ into finitely many Lipschitz subdomains on which $\Theta$ is injective and bi-Lipschitz.

\begin{lemma}
\label{lem:cover}
Under the hypotheses of Theorem~\ref{thm:rig_main_codim0}, there are finitely many strongly Lipschitz domains $U_1,\dots,U_N \subset \Omega$  such that:
\begin{enumerate}
\item[(i)] $\Omega=\bigcup_{i=1}^N U_i$;
\item[(ii)] For each $i$, $\Theta|_{U_i}$ is a $C^1$ and bi-Lipschitz diffeomorphism onto its image;
\item[(iii)] The image sets $\widehat U_i:=\Theta(U_i)$ are strongly Lipschitz domains in $\mathbb{S}^n$.
\end{enumerate}
\end{lemma}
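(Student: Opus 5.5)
The plan is a compactness argument. We use the inverse function theorem to obtain local injectivity, pick small ``nice'' sets near every point of $\overline\Omega$, and transport their Lipschitz regularity through $\Theta$ by Lemma~\ref{lemma:C1diffeo_takes_lip_to_lip}.

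First, fix an auxiliary smooth Riemannian metric on $\mathcal{M}$. Since $\Theta\in C^1$ is an immersion between equidimensional manifolds, for each $x\in\overline\Omega$ the inverse function theorem gives an open neighbourhood $\mathcal{O}_x$ on which $\Theta$ is a $C^1$ diffeomorphism onto its open image. Shrinking $\mathcal{O}_x$, we may assume it lies in a chart and has compact closure in a larger set on which $\Theta$ is still a diffeomorphism. Then $d\Theta$ and $d(\Theta|_{\mathcal{O}_x})^{-1}$ are bounded on $\overline{\mathcal O_x}$, which yields the bi-Lipschitz property, at least on convex-in-chart subsets.

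Second, we construct Lipschitz pieces.
\begin{enumerate}
\item[(a)] For interior points $x\in\Omega$, take $V_x$ to be a small coordinate ball with $\overline{V_x}\subset\Omega\cap\mathcal O_x$. A ball is strongly Lipschitz.
\item[(b)] For boundary points $x_0\in\partial\Omega$, use Definition~\ref{def:lip_mfd}: in a chart, $\Omega$ is locally the supergraph $\{t>\gamma(x')\}$ of a Lipschitz $\gamma$ inside a cylinder $C_{r,h}$. Shrinking $r$ and $h$ so that the chart preimage of $C_{r,h}$ lies in $\mathcal O_{x_0}$, set $V_{x_0}:=\varphi^{-1}(C_{r,h}\cap\{t>\gamma\})$. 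This set is the region between a Lipschitz graph and the flat top of a cylinder, so it is a strongly Lipschitz domain; the corners where the graph meets the lateral boundary are handled by taking $|\gamma|<h$ strictly on $|x'|\le r$, which makes these corners Lipschitz.
\end{enumerate}
By compactness of $\overline\Omega$, finitely many $V_{x_1},\dots,V_{x_N}$ cover $\overline\Omega$. Setting $U_i:=V_{x_i}\subset\Omega$ gives $\bigcup_i U_i=\Omega$, which is (i).

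Third, we verify (ii) and (iii). Each $\overline{U_i}$ is compact and contained in $\mathcal O_{x_i}$, on which $\Theta$ is a $C^1$ diffeomorphism onto its image, so Lemma~\ref{lemma:C1diffeo_takes_lip_to_lip} gives (iii). For the bi-Lipschitz claim in (ii), Lipschitz continuity of $\Theta$ is immediate from boundedness of $d\Theta$ on the compact $\overline{U_i}$. For the inverse we need $d(x,y)\le C\,|\Theta(x)-\Theta(y)|$. Locally this follows from the mean value theorem in the chart. Globally on $\overline{U_i}$ we use an injectivity-plus-compactness argument: the quotient $d(x,y)/|\Theta(x)-\Theta(y)|$ is bounded away from the diagonal by continuity and injectivity on the compact set $\overline{U_i}$ (injective because $\overline{U_i}\subset\mathcal O_{x_i}$), and bounded near the diagonal by the local estimate.

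The main obstacle is the boundary step: checking that the truncated cylinder piece is genuinely strongly Lipschitz at the corners, and that its closure sits inside the injectivity neighbourhood. If the corner issue is delicate, the first thing to try is to replace the flat cylinder top and sides by a slightly tilted or rounded cutoff transverse to the graph, so that every boundary point is locally a single Lipschitz graph in a suitably rotated direction.
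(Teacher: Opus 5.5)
Your strategy matches the paper's: inverse-function-theorem neighbourhoods $\mathcal O_x$, small balls at interior points, chart cylinders cut by the Lipschitz graph at boundary points, a finite subcover, and Lemma~\ref{lemma:C1diffeo_takes_lip_to_lip} for (iii). Your injectivity-plus-compactness argument for the inverse Lipschitz bound in (ii) is fine. It is more explicit than the paper, which leaves (ii) implicit in the compact containment $\overline{U_i}\subset\mathcal O_{x_i}$.

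\emph{The gap is in the compactness step, which fails as written.} For $x_0\in\partial\Omega$, your set $V_{x_0}=\varphi^{-1}(C_{r,h}\cap\{t>\gamma\})$ lies in $\Omega$ and does not contain $x_0$. So every $V_x$ is a subset of $\Omega$, the family $\{V_x\}$ misses $\partial\Omega$ entirely, and it is not an open cover of $\overline\Omega$. You therefore cannot invoke compactness of $\overline\Omega$ to extract finitely many of them.

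The repair is exactly what the paper does:
\begin{enumerate}
\item[1.] For boundary points, use the full open neighbourhood $W_{x_0}:=\varphi^{-1}(C_{r,h})$, with $\overline{C_{r,h}}\subset\varphi(U\cap\mathcal O_{x_0})$.
\item[2.] Extract a finite subcover of $\overline\Omega$ from $\{V_x\}_{x\in\Omega}\cup\{W_{x_0}\}_{x_0\in\partial\Omega}$.
\item[3.] Intersect each member with $\Omega$. Since $W_{x_0}\subset U$, the defining property of the Lipschitz chart gives $W_{x_0}\cap\Omega=\varphi^{-1}\bigl(C_{r,h}\cap\varphi(U\cap\Omega)\bigr)=V_{x_0}$.
\end{enumerate}
This recovers your pieces and yields (i).

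Two smaller points.
\begin{enumerate}
\item[(a)] When you shrink $(r,h)$ to $(\rho,\eta)$, you must keep $\rho\operatorname{Lip}(\gamma)<\eta$. Otherwise $|\gamma|<\eta$ on $|x'|\le\rho$ can fail. The paper records this constraint.
\item[(b)] The corners where the graph meets the lateral wall are Lipschitz because of transversality, not because $|\gamma|<h$. That condition only keeps the graph away from the top and bottom faces, which rules out cusps and disconnection. At such a corner, let $e\in H$ be the outward radial direction. Then both $\{t>\gamma\}$ and $\{|x'|<r\}$ are locally supergraphs in the tilted direction $\nu-\lambda e$, for any $0<\lambda<1/\operatorname{Lip}(\gamma)$. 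Their intersection is then the supergraph of the maximum of two Lipschitz functions. So the truncated cylinder is strongly Lipschitz as it stands, and no rounded or tilted cutoff is needed.
\end{enumerate}
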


\begin{proof}
Since $d\Theta_x:T_x\mathcal{M}\to T_{\Theta(x)}\mathbb{S}^n$ is an isomorphism for every $x \in \mathcal{M}$, the inverse function theorem implies that every point $x \in \overline\Omega$ has an open neighbourhood $\mathcal{O}_x \subset \mathcal{M}$ such that $\Theta|_{\mathcal{O}_x}$ is a $C^1$ diffeomorphism onto its image.  

If $x\in\Omega$, choose $U_x$ to be a geodesic ball centred at $x$ with sufficiently small radius that $U_x$ is compactly contained in $\mathcal{O}_x$ and $U_x$ has smooth (in particular strongly Lipschitz) boundary.

If $x \in \partial\Omega$, then pick a $C^1$ chart $(V, \psi)$ at $x$ such that $\psi(V \cap \Omega) \subset \mathbb{R}^n$ is a locally strongly Lipschitz domain near $\psi(x)$. Hence, we can find an $(n-1)$-dimensional hyperplane $H \subset \mathbb{R}^n$ with unit normal $\nu$, positive numbers $r,h > 0$, and an open cylinder $C_{r,h} \subset \mathbb{R}^n$ as in \eqref{eq:def_Crh} such that 
\begin{equation*}
    C_{r,h} \cap \psi(V \cap \Omega) = C_{r,h} \cap \{x'+t\nu: x' \in H, t > \gamma(x')\}
\end{equation*}
with a Lipschitz function $\gamma: H \to \mathbb{R}$ satisfying $\gamma(0) = 0$ and $|\gamma(x')| <h$ for $|x'|\leq r$. The same conditions still hold with any $(\rho,\eta)$ replacing $(r,h)$ as long as $0<\rho\leq r$, $0<\eta\leq h$, and $\rho {\rm Lip}(\gamma)<\eta$. Hence, by shrinking $r$ and $h$ if necessary we may assume that the cylinder $C_{r,h}$ is compactly contained inside $\psi(V \cap \mathcal{O}_x)$. Let $W_x := \psi^{-1}(C_{r,h})$, which is a connected open neighbourhood of $x$ compactly contained in $\mathcal{O}_x$. Moreover, $U_x := W_x \cap \Omega$ is by construction a connected open subset of $\Omega$ with strongly Lipschitz boundary.

Since the sets $\{U_x\}_{x \in \Omega} \cup \{W_x\}_{x \in \partial\Omega}$ form an open cover for the compact set $\overline\Omega$, we can find a finite subcover $\{U_{x_i}\}_{i=1}^M \cup \{W_{x_i}\}_{i=M+1}^N$. The sets $U_i := U_{x_i}$ for $1\leq i \leq M$ and $U_i := W_{x_i}\cap \Omega$ for $M+1\leq i\leq N$ have the required properties, by Lemma \ref{lemma:C1diffeo_takes_lip_to_lip}.
\end{proof}

\begin{lemma}
\label{lem:pullback}
Let $U\subset\Omega$ be one of the domains $\{U_1,\cdots,U_N\}$ in
Lemma~\ref{lem:cover}, let $\hat U :=\Theta(U)$, and let
$\hat\Theta:=\hat U \to U$ be inverse to $\Theta|_U$. For
$\Phi\in W^{1,p}(U;\mathbb{S}^n)$, define
\begin{equation*}
    u := \Phi \circ \hat\Theta : \hat U \to \mathbb{S}^n.
\end{equation*}
Then $u \in W^{1,p}(\hat U;\mathbb{S}^n)$ and for every $Q \in SO(n+1)$, there holds
\begin{align}
 \|u-Q\iota\|_{W^{1,p}(\hat U,\sigma;\mathbb{R}^{n+1})} &= \|\Phi-Q\Theta\|_{W^{1,p}(U, \Theta^*e; \mathbb{R}^{n+1})}, \label{eq:pullback-W}\\
 \|\operatorname{dist}(du,SO(\sigma,\sigma))\|_{L^p(\hat U,\sigma)} &= \|\operatorname{dist}(d\Phi, SO(\Theta^*e, \sigma))\|_{L^p(U,\Theta^*e)}. \label{eq:pullback-strain}
\end{align}
\end{lemma}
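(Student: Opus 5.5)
The whole lemma follows from one observation: $\Theta$ takes values in $\mathbb{S}^n\subset\mathbb{R}^{n+1}$, so the pulled-back Euclidean metric equals the pulled-back round metric, $\Theta^*e=\Theta^*\sigma$. Consequently $\Theta|_U:(U,\Theta^*e)\to(\hat U,\sigma)$ is, by construction, an orientation-preserving Riemannian isometry onto $\hat U$, with inverse $\hat\Theta$. Both identities \eqref{eq:pullback-W} and \eqref{eq:pullback-strain} should then be pure change-of-variables statements under this isometry. I will prove them pointwise and then integrate.

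\textbf{Step 1: $u\in W^{1,p}(\hat U;\mathbb{S}^n)$ and the chain rule.} By Lemma~\ref{lem:cover}(ii), $\Theta|_U$ is a $C^1$ bi-Lipschitz diffeomorphism. Its inverse $\hat\Theta$ is bi-Lipschitz too, and it extends as a $C^1$ map to a neighbourhood of $\overline{\hat U}$, because $\overline U$ is compactly contained in some $\mathcal{O}_x$. The standard result on composing Sobolev maps with bi-Lipschitz $C^1$ changes of variables (in local charts, as in Ziemer) then gives $u=\Phi\circ\hat\Theta\in W^{1,p}(\hat U;\mathbb{R}^{n+1})$. It also gives, for a.e.\ $y=\Theta(x)$,
\[
du_y=d\Phi_x\circ(d\Theta_x)^{-1}.
\]
Since $|\Phi|=1$ a.e., we have $|u|=1$ a.e. I expect this step to be the only real obstacle, namely justifying the chain rule up to the boundary. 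The compact containment supplied by the construction in Lemma~\ref{lem:cover} is exactly what makes the bi-Lipschitz constants uniform, so the obstacle is technical rather than conceptual.

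\textbf{Step 2: identity \eqref{eq:pullback-W}.} Since $(Q\iota)\circ\Theta=Q\Theta$, the same chain rule gives
\[
(u-Q\iota)\circ\Theta=\Phi-Q\Theta,\qquad d(u-Q\iota)_{\Theta(x)}=d(\Phi-Q\Theta)_x\circ(d\Theta_x)^{-1}.
\]
Because $d\Theta_x:(T_x\mathcal{M},\Theta^*e)\to(T_{\Theta(x)}\mathbb{S}^n,\sigma)$ is a linear isometry, precomposing with its inverse preserves the Frobenius norm $|\cdot|_{g,e}$. To see this, note that $d\Theta_x$ carries an orthonormal basis to an orthonormal basis. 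Finally, since $\Theta|_U$ is a Riemannian isometry, the change of variables $y=\Theta(x)$ pushes $dvol_{\Theta^*e}$ forward to $dvol_\sigma$. Integrating the pointwise equalities of $|\cdot|^p$ and $|d\cdot|^p$ therefore yields \eqref{eq:pullback-W}.

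\textbf{Step 3: identity \eqref{eq:pullback-strain}.} At a.e.\ $x$, consider the map $R\mapsto R\circ(d\Theta_x)^{-1}$. Since $d\Theta_x$ is an orientation-preserving isometry, this map is a bijection
\[
SO\bigl((T_x\mathcal{M},\Theta^*e),(T_{\Phi(x)}\mathbb{S}^n,\sigma)\bigr)\longrightarrow SO\bigl((T_{\Theta(x)}\mathbb{S}^n,\sigma),(T_{u(\Theta(x))}\mathbb{S}^n,\sigma)\bigr).
\]
It also preserves Frobenius distances:
\[
|d\Phi_x-R|=\bigl|du_{\Theta(x)}-R\circ(d\Theta_x)^{-1}\bigr|.
\]
Taking minima over $R$ gives $\operatorname{dist}(du,SO(\sigma,\sigma))\circ\Theta=\operatorname{dist}(d\Phi,SO(\Theta^*e,\sigma))$ a.e. The same change of variables as in Step 2 then gives \eqref{eq:pullback-strain}.
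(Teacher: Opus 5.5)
Your proposal is correct and follows the paper's proof essentially step for step. Both arguments use the chain rule $du_{\Theta(x)}=d\Phi_x\circ(d\Theta_x)^{-1}$ from the bi-Lipschitz $C^1$ change of variables, the invariance of the Frobenius norm under precomposition with the isometry $(d\Theta_x)^{-1}$, and the change of variables via $\Theta^*e=\Theta^*\sigma$. Your Step 3 bijection $R\mapsto R\circ(d\Theta_x)^{-1}$ is a slightly cleaner justification of \eqref{eq:distance-invariance}: unlike the paper's stated version, it does not require $d\Phi_x$ to be invertible or orientation-preserving, which a general $W^{1,p}$ map need not satisfy.
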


\begin{proof}
By Lemma \ref{lem:cover}, $\Theta|_{U}$ is a bi-Lipschitz and $C^1$ diffeomorphism onto the strongly Lipschitz domain $\hat U$. Hence, the composition $u = \Phi \circ \hat\Theta$ belongs to $W^{1,p}(\hat U; \mathbb{S}^n)$ and its differential satisfies the chain rule:
\begin{equation} \label{eq:chain_u}
    du_{\Theta(x)} = d\Phi_x \circ (d\Theta_x)^{-1}
\end{equation}
for a.e. $x \in U$. (This fact can be deduced from the corresponding Euclidean statement by working in local charts.) By the definition of the pullback metric $\Theta^*e$, the map
\begin{equation*}
    d\Theta_x: (T_x\mathcal{M}, (\Theta^*e)_x) \to (T_{\Theta(x)}\mathbb{S}^n,\sigma_{\Theta(x)})
\end{equation*}
is a linear isometry for every $x \in \mathcal{M}$, and is orientation-preserving by our assumption on $\Theta$. Moreover, for a.e. $x \in \Omega$ we have
\begin{equation*}
     d(u-Q\iota)_{\Theta(x)} = (d\Phi_x - Q d\Theta_x) \circ (d\Theta_x)^{-1}.
\end{equation*}
Right composition with the isometry $(d\Theta_x)^{-1}$ preserves the
Frobenius norm, so 
\begin{equation*}
    |d(u-Q\iota)_{\Theta(x)}|_{\sigma_{\Theta(x)}, e} = |d\Phi_x - Qd\Theta_x|_{(\Theta^*\sigma)_x, e}.
\end{equation*}
Moreover, by definition of $u$ we have $(u-Q\iota)(\Theta(x)) =\Phi(x)-Q\Theta(x)$. Hence, a change of variables via $\hat\Theta$ yields \eqref{eq:pullback-W}.

We now prove \eqref{eq:pullback-strain}. Recall that if $V,W,Z$ are oriented
inner-product spaces of equal dimension $n$, $A \in \operatorname{Hom}(V,Z)$ is invertible and orientation-preserving, and $B \in SO(V,W)$, then
\begin{equation}
\label{eq:distance-invariance}
 \operatorname{dist}\bigl(A\circ B^{-1},SO(W,Z)\bigr)
 =\operatorname{dist}\bigl(A,SO(V,Z)\bigr).
\end{equation}
Indeed, this can be seen from \eqref{eq:closest_O}. Apply \eqref{eq:distance-invariance} with $A=d\Phi_x$ and $B = d\Theta_x$ and use \eqref{eq:chain_u} for a.e. $x \in U$. This proves \eqref{eq:pullback-strain}.
\end{proof}

\begin{lemma}
\label{lem:matrix-coercivity}
Let $E\subset\mathbb{S}^n$ be a nonempty open set and let $p\in[1,\infty)$. There is a constant $c = c(E,p,n)>0$ such that
\begin{equation}
\label{eq:matrix-coercivity}
 \Big( \int_E |Ay|^p dvol_\sigma(y) \Big)^{1/p} \ge c|A|
 \quad \forall A\in\mathbb{R}^{(n+1)\times(n+1)},
\end{equation}
where $|A|$ denotes the Frobenius norm.
\end{lemma}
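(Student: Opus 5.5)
The plan is a homogeneity-plus-compactness argument. Define $N(A) := \big(\int_E |Ay|^p\,dvol_\sigma(y)\big)^{1/p}$ for $A \in \mathbb{R}^{(n+1)\times(n+1)}$. Since $E$ has finite measure and $|Ay| \le |A|\,|y| = |A|$ on $\mathbb{S}^n$, $N$ is finite. By Minkowski's inequality in $L^p(E)$ it is a seminorm, and it is $1$-homogeneous, so proving \eqref{eq:matrix-coercivity} reduces to proving $c := \min_{|A|=1} N(A) > 0$. The Lipschitz bound $|N(A)-N(B)| \le N(A-B) \le \operatorname{vol}_\sigma(E)^{1/p}|A-B|$ shows $N$ is continuous. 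The Frobenius unit sphere in $\mathbb{R}^{(n+1)\times(n+1)}$ is compact, so the minimum is attained at some $A_0$ with $|A_0|=1$. It therefore suffices to show that $N$ is a genuine norm, i.e.\ that $N(A)=0$ forces $A=0$.

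This definiteness is the only substantive step. Suppose $N(A)=0$. Then $Ay=0$ for $\sigma$-a.e.\ $y\in E$, and by continuity of $y\mapsto Ay$ this gives $Ay=0$ for every $y\in E$, since a nonempty open set minus a null set is dense in it. Hence $E\subset \ker A\cap\mathbb{S}^n$. If $A\neq 0$, then $\ker A$ is a proper linear subspace of $\mathbb{R}^{n+1}$, so $\ker A\cap\mathbb{S}^n$ is contained in a great subsphere of dimension at most $n-1$. Such a set has empty interior in $\mathbb{S}^n$ and cannot contain the nonempty open set $E$. Equivalently, $E$ contains $n+1$ linearly independent vectors: a small open patch of $\mathbb{S}^n$ around $y_0$ spans $\mathbb{R}^{n+1}$, because it contains $y_0$ together with points $\cos t\, y_0+\sin t\, v$ for every tangent direction $v$. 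Therefore $A=0$, so $c>0$, and $c$ depends only on $E$, $p$ and $n$.

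None of these steps should be a real obstacle. The only point needing care is the passage from "a.e.\ on $E$" to "spanning set", and this is handled by continuity together with the positivity of $\sigma$-measure on nonempty open sets.
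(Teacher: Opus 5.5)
Your proposal is correct and follows essentially the same route as the paper: the left-hand side is a seminorm, it is definite because a nonempty open subset of $\mathbb{S}^n$ spans $\mathbb{R}^{n+1}$, and the inequality then follows from equivalence of norms in finite dimensions. Your compactness argument on the Frobenius unit sphere and your passage from a.e.\ vanishing to vanishing everywhere on $E$ (by continuity) spell out details that the paper leaves implicit.
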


\begin{proof}
It is clear that the left-hand side (as a function of $A$) defines a seminorm on $\mathbb{R}^{(n+1)\times(n+1)}$. In fact, it is a norm. To check the positivity condition, if the left-hand side of \eqref{eq:matrix-coercivity} is zero, then $Ay=0$ for every $y\in E$. Because any nonempty open subset of $\mathbb{S}^n$ spans $\mathbb{R}^{n+1}$, this implies that $A=0$. All norms on a finite-dimensional vector space are equivalent, which gives \eqref{eq:matrix-coercivity}. 
\end{proof}

We are now ready to give the proof of Theorem \ref{thm:rig_main_codim0}.

\begin{proof}[Proof of Theorem~\ref{thm:rig_main_codim0}]
Choose the finite cover $U_1,\dots,U_N$ from
Lemma~\ref{lem:cover}, and put $\hat U_i := \Theta(U_i)$. For each $i$, define
\begin{equation*}
    u_i:=\Phi\circ(\Theta|_{U_i})^{-1}: \hat U_i \to \mathbb{S}^n.
\end{equation*}
By Lemma~\ref{lem:pullback}, $u_i\in W^{1,p}(\hat U_i;\mathbb{S}^n)$. Applying the spherical geometric rigidity estimate in Kupferman--Maor \cite[Theorem 4.2]{km25_linearization} on the strongly Lipschitz domain $\hat U_i\subset\mathbb{S}^n$, we obtain rotations $Q_i \in SO(n+1)$ such that
\begin{equation}
\label{eq:km25_linearisation-local}
 \|u_i-Q_i\iota\|_{W^{1,p}(\hat U_i, \sigma; \mathbb{R}^{n+1})} \leq C_i \|\operatorname{dist}(du_i, SO(\sigma,\sigma))\|_{L^p(\hat U_i,\sigma)},
\end{equation}
where the constants $C_i = C_i(\hat U_i, n,p)$ do not depend on $\Phi$. Pulling back these norms onto $\mathcal{M}$ via Lemma~\ref{lem:pullback} gives the inequalities
\begin{equation}
\label{eq:local-estimate}
 \|\Phi-Q_i\Theta\|_{W^{1,p}(U_i, \Theta^*e;\mathbb{R}^{n+1})}
 \le C_i\|e_\Phi\|_{L^p(U_i,\Theta^*e)},
\end{equation}
where for notational convenience we write
\begin{equation*}
    e_\Phi(x) := \operatorname{dist}(d\Phi_x, SO((\Theta^*e)_x, \sigma_{\Phi(x)})), \quad x \in \Omega.
\end{equation*}

We would like to merely sum the inequalities \eqref{eq:local-estimate} over $i=1,\dots,N$ but the rotations $Q_i$ chosen on each patch $U_i$ may be different. We shall show now that, at the price of enlarging the constants by a finite, positive factor still independent of $\Phi$, the choice of $Q := Q_1$ works for Theorem \ref{thm:rig_main_codim0}.

To that end, we compare the rotations on overlapping patches. We will show that in fact, each $Q_i$ is close to $Q_1$ with distance controlled by $\|e_\Phi\|_{L^p(\Omega, \Theta^*e)}$. If $U_i\cap U_j \neq \emptyset$, then $\Theta(U_i \cap U_j)$ is a nonempty open subset of $\mathbb{S}^n$. Since $\Theta|_{U_i}$ is injective, a change of variables and the triangle inequality give
\begin{align*}
 \|(Q_i-Q_j)y\|&_{L^p(\Theta(U_i \cap U_j),\sigma; \mathbb{R}^{n+1})}
 =\|(Q_i-Q_j)\Theta\|_{L^p(U_i\cap U_j, \Theta^*e; \mathbb{R}^{n+1})}\\
 &\leq \|\Phi-Q_i\Theta\|_{L^p(U_i\cap U_j, \Theta^*e; \mathbb{R}^{n+1})}
 +\|\Phi-Q_j\Theta\|_{L^p(U_i\cap U_j,\Theta^*e; \mathbb{R}^{n+1})}\\
 &\leq (C_i+C_j) \|e_\Phi\|_{L^p(\Omega; \Theta^*e)}.
\end{align*}
Estimating the left-hand side from below, Lemma~\ref{lem:matrix-coercivity} implies that there exists a constant $C_* = C_*(\Omega, U_1,\dots, U_N, \Theta, n,p)$ such that
\begin{equation}
\label{eq:edge-rotations}
 |Q_i-Q_j|\leq C_* \|e_\Phi\|_{L^p(\Omega; \Theta^*e)}
 \quad \text{whenever } U_i\cap U_j \neq \emptyset.
\end{equation}
Consider the graph $G$ with vertices $1,\dots, N$ and edges $\{i,j\}$ if and only if $U_i \cap U_j~\neq~\emptyset$. Because $\Omega$ is connected and $\Omega = \cup_{i=1}^N U_i$, $G$ is a connected graph. Hence, by iterating the estimate \eqref{eq:edge-rotations} over a path from $i$ to $1$ in $G$ and enlarging $C_*$ by a factor of $N$, we have
\begin{equation} \label{eq:Qi_Q1_close}
    |Q_i - Q_1| \leq C_* \|e_\Phi\|_{L^p(\Omega; \Theta^*e)} \quad \forall i=1,\dots,N.
\end{equation}

We sum over $i$ and combine \eqref{eq:Qi_Q1_close} with the $N$ local estimates \eqref{eq:local-estimate} to obtain
\begin{align*}
 \|\Phi-Q_1\Theta\|&_{W^{1,p}(\Omega, \Theta^*e;\mathbb{R}^{n+1})}^p \leq  \sum_{i=1}^N \|\Phi-Q_1\Theta\|_{W^{1,p}(U_i, \Theta^*e;\mathbb{R}^{n+1})}^p\\
 &{\leq 2^{p-1}} \sum_{i=1}^N \Big( \|\Phi-Q_i\Theta\|_{W^{1,p}(U_i, \Theta^*e;\mathbb{R}^{n+1})}^p + |Q_i-Q_1|^p \|\Theta\|_{W^{1,p}(U_i,\Theta^*e; \mathbb{R}^{n+1})}^p \Big)\\
 &{\leq 2^{p-1}} \sum_{i=1}^N \Big( C_i^p \|e_\Phi\|_{L^p(\Omega, \Theta^*e)}^p + C_*^p \|\Theta\|_{W^{1,p}(\Omega, \Theta^*e; \mathbb{R}^{n+1})}\|e_\Phi\|^p_{L^p(\Omega, \Theta^*e)} \Big)\\
 &= C^p \|e_\Phi\|_{L^p(\Omega, \Theta^*e)}^p
\end{align*}
where $C$ depends on $\Theta, \Omega, n, p$ but not on $\Phi$. Theorem \ref{thm:rig_main_codim0} follows with $Q := Q_1$.
\end{proof}

\section{Proof of Theorem \ref{thm:rig_main_codim1}} \label{sec:codim1_proof}

In this section we prove our main Theorem~\ref{thm:rig_main_codim1}, \emph{i.e.}, the geometric rigidity estimate for codimension-1 isometric immersions into round spheres. Our arguments largely follow Alpern--Kupferman--Maor~\cite{akm24_stability}.

\begin{proof}[Proof of Theorem \ref{thm:rig_main_codim1}]
    Consider the following map $\Theta : M \times \mathbb{R} \to \mathbb{S}^{n+1}$ defined by extending $\theta$ along normal geodesics in $\mathbb{S}^{n+1}$:
    \begin{equation*}
        \Theta(x,t) := \cos t \theta(x) + \sin t \nu_\theta(x).
    \end{equation*}
    We claim that $\Theta$ is an orientation-preserving immersion on $\mathcal{M} := M \times (-2h,2h)$ for sufficiently small $h > 0$. 
    
    To see this, let us view
    \begin{equation*}
        d\Theta_{(x,t)} : T_xM \times \mathbb{R} \longrightarrow T_{\Theta(x,t)}\mathbb{S}^{n+1} = \langle\Theta(x,t)\rangle^\perp \leq \mathbb{R}^{n+2},
    \end{equation*} 
where $T_{(x,t)} (M \times \mathbb{R}) \simeq T_xM \times \mathbb{R}$. Accordingly, we shall use the following notational convention: given a linear map $A \in \operatorname{Hom}(T_x M; \mathbb{R}^{n+2})$ and a vector $v \in \mathbb{R}^{n+2}$, we will write $A \oplus v$ for the following linear map from $T_x M \times \mathbb{R}$ into $\mathbb{R}^{n+2}$:
    \begin{equation*}
        (A\oplus v)(X,s) := AX +sv, \quad X \in T_xM, s \in \mathbb{R}.
    \end{equation*}
    With this notation, we easily compute that
    \begin{align*}
        d\Theta_{(x,t)} &= (\cos t ~ d\theta_x + \sin t ~ d\nu_\theta(x)) \oplus (-\sin t ~ \theta(x) + \cos t ~ \nu_\theta(x))\\
        &= \big(d\theta_x \circ (\cos t ~ id - \sin t ~ S)\big) \oplus (-\sin t ~ \theta(x) + \cos t ~ \nu_\theta(x))
    \end{align*}
    Since $d\theta_x$ is an invertible linear map from $T_xM$ into the orthogonal complement of the span of $\{\theta, \nu_\theta\}$ in $\mathbb{R}^{n+2}$, we see that $d\Theta_{(x,t)}$ is invertible if and only if $(\cos t ~id - \sin t~S)$ is an invertible linear map on $T_xM$. Replacing $M$ if necessary by an open subset $\Omega' \subset\subset M$ compactly containing $\Omega$, we can assume that $S$ is uniformly bounded over $M$. Because $\sin t \approx 0$ and $\cos t \approx 1$ for small $t$ it is then clear that $\Theta$ is an immersion on $\mathcal{M} := M \times (-2h,2h)$ for sufficiently small $h > 0$. Moreover, the definition of $\nu_\theta$ implies that $\Theta$ is orientation-preserving.

    We can also calculate the pullback metric $\Theta^*\sigma = \Theta^*e$. For any $(x,t) \in \Omega$, $X_1,X_2 \in T_xM$, and $s_1,s_2 \in \mathbb{R}$, we have
    \begin{align}
        \langle (X_1,s_1), (X_2,s_2)&\rangle_{(\Theta^*\sigma)_{(x,t)}} = \big\langle d\Theta_{(x,t)}(X_1,s_1), d\Theta_{(x,t)}(X_2,s_2) \big\rangle_{e} \nonumber \\
        &= \big\langle d\theta_x((\cos t ~id - \sin t ~S)X_1) + s_1 (-\sin t~\theta(x)+\cos t~\nu_\theta(x)), \nonumber \\
        &\qquad d\theta_x((\cos t ~id - \sin t ~S)X_2) + s_2 (-\sin t~\theta(x)+\cos t~\nu_\theta(x)) \big\rangle_e \nonumber \\
        &= \big\langle (\cos t ~id - \sin t ~S)X_1, (\cos t ~id - \sin t ~S)X_2 \big\rangle_{g_x} + s_1 s_2.\label{eq:G_pullback}
    \end{align}
    The last line follows from (a) $\theta$ is an isometric immersion of $g$; (b) the $d\theta_x$-terms are $e$-orthogonal to the terms involving $s_1$ and $s_2$; and (c) the vector $(-\sin t~ \theta(x) + \cos t ~\nu_\theta(x))$ has unit length in $\mathbb{R}^{n+2}$.

    Similarly, we can extend $\phi$ along normal geodesics in $\mathbb{S}^{n+1}$. Let $\Omega := \omega \times (-h,h)$, which is a strongly Lipschitz domain compactly contained in $\mathcal{M}$. Define
    \begin{equation*}
        \Phi(x,t) := \cos t \phi(x) + \sin t \nu_\phi(x), \quad (x,t) \in \Omega.
    \end{equation*}
    Then, $\Phi$ belongs to $W^{1,p}(\Omega;\mathbb{S}^{n+1})$. 

    Applying Theorem~\ref{thm:rig_main_codim0} to $\Theta$ and $\Phi$ on $\Omega \subset \mathcal{M}$, we deduce that there exists $Q = Q(\Theta,\Phi) \in SO(n+2)$ such that
    \begin{equation} \label{eq:codim0_prelim}
        \|\Phi - Q\Theta\|_{W^{1,p}(\Omega,\Theta^*\sigma; \mathbb{R}^{n+1})} \leq C \|\operatorname{dist}(d\Phi, SO(\Theta^*\sigma, \sigma))\|_{L^p(\Omega, \Theta^*\sigma)},
    \end{equation}
    where $C = C(\Theta, \Omega, n,p)$. We need to estimate the right-hand side from above and the left-hand side from below by quantities depending only on the original maps $\theta, \phi$.

    We shall argue by orthogonalising the differential $d\Phi$. By a similar calculation to the one for $\Theta$, $d\Phi_{(x,t)} : T_x M \times \mathbb{R} \to \langle\Phi(x,t)\rangle^\perp$ is the linear map given by
    \begin{equation*}
        d\Phi_{(x,t)} = \big(d\phi_x \circ (\cos t ~ id - \sin t ~ S_\phi)\big) \oplus (-\sin t ~ \phi(x) + \cos t ~ \nu_\phi(x))
    \end{equation*}
    for almost every $(x,t) \in \Omega$. Consider the following linear maps:
    \begin{align*}
        &A_{(x,t)} : T_x M \times \mathbb{R} \to \langle \Phi(x,t)\rangle^\perp \leq \mathbb{R}^{n+2},\\
        &A_{(x,t)} := \big( O(d\phi_x) \circ (\cos t ~ id - \sin t ~ S) \big) \oplus (-\sin t ~ \phi(x) + \cos t ~ \nu_\phi(x)),
    \end{align*}
    where $O(d\phi_x) \in O((T_xM, g_x), (T_{\phi(x)}\mathbb{S}^{n+1}, \sigma_{\phi(x)}))$ is the orthogonal map closest to $d\phi_x$ in the Frobenius norm. It is clear that $A$ indeed maps into $\langle \Phi\rangle^\perp$. By an argument similar to the computation of $d\Theta_{(x,t)}$ and the pullback metric $\Theta^*\sigma$, $A_{(x,t)}$ is an orientation-preserving  invertible linear map from $T_xM \times \mathbb{R}$ into $\langle \Phi(x,t) \rangle^\perp$. Moreover, by a calculation analogous to that for \eqref{eq:G_pullback}, we find that $A_{(x,t)}$ is an isometry of inner product spaces from $(T_xM \times \mathbb{R}, (\Theta^*\sigma)_{(x,t)})$ into $(\langle \Phi(x,t)\rangle^\perp, e)$ for a.e. $(x,t) \in \Omega$; namely that  \begin{equation*}
        A \in SO(\Theta^*\sigma, \sigma).
    \end{equation*}
    Therefore, the right-hand side of \eqref{eq:codim0_prelim} can be bounded above by
    \begin{equation} \label{eq:rhs_bd_by_A}
        \operatorname{dist}(d\Phi_{(x,t)}, SO(\Theta^*\sigma,\sigma)) \leq |d\Phi_{(x,t)} - A_{(x,t)}|_{\Theta^*\sigma,\sigma}
    \end{equation}

    Let $G := g + dt^2$ denote the product metric on $\mathcal{M} = M \times (-2h,2h)$. Because $\Theta$ is smooth on $\mathcal{M}$ and $\Omega$ is compactly contained in $\mathcal{M}$, the two metrics $\Theta^*\sigma$ and $G$ are equivalent on $\overline\Omega$, i.e. there exists a constant $c>0$ depending only on $\Theta$ and $\Omega$ such that
    \begin{equation*}
        c |(X,s)|_{G_{(x,t)}} \leq |(X,s)|_{(\Theta^*\sigma)_{(x,t)}} \leq c^{-1}|(X,s)|_{G_{(x,t)}} \quad \forall (x,t) \in \overline\Omega, ~(X,s) \in T_{(x,t)}\mathcal{M}.
    \end{equation*}
    Therefore, for a.e. $(x,t) \in \Omega$ there holds
    \begin{equation} \label{eq:Thetasigma_vs_G}
        |d\Phi_{(x,t)} - A_{(x,t)}|_{\Theta^*\sigma,\sigma} \leq C |d\Phi_{(x,t)} - A_{(x,t)}|_{G,\sigma}
    \end{equation}
    where $C$ only depends on $\Theta$ and $\Omega$. Because $A_{(x,t)}$ and $d\Phi_{(x,t)}$ agree on the $\mathbb{R}$-factor of $T_{(x,t)}\mathcal{M} \simeq T_xM \times \mathbb{R}$, we have
    \begin{align*}
        |d\Phi_{(x,t)} - A_{(x,t)}|_{G,\sigma} &= \big|d\phi_x \circ (\cos t ~id - \sin t ~S_\phi) - O(d\phi_x) \circ (\cos t ~id - \sin t~ S) \big|_{g,\sigma}\\
        &\leq \big|(d\phi_x-O(d\phi_x)) \circ(\cos t~id-\sin t~S)\big|_{g,\sigma} + |\sin t| \big|d\phi_x\circ(S-S_\phi)\big|_{g,\sigma}\\
        &\leq C \big( \operatorname{dist}(d\phi_x, O(g,\sigma)) + |d\phi_x \circ (S-S_\phi)|_{g,\sigma} \big)
    \end{align*}
    where $C := \max\{1, \|\cos t~id-\sin t S\|_{L^\infty(\Omega)}\} \leq \sqrt{n} + \|S\|_{L^\infty(\Omega)}$. Combining this with \eqref{eq:rhs_bd_by_A} and \eqref{eq:Thetasigma_vs_G}, we have shown that
    \begin{equation} \label{eq:rhs_Phi_to_phi}
        \operatorname{dist}^p(d\Phi_{(x,t)}, SO(\Theta^*\sigma,\sigma)) \leq C \big( \operatorname{dist}^p(d\phi_x, O(g,\sigma)) + |d\phi_x \circ (S-S_\phi)|^p_{g,\sigma} \big),
    \end{equation}
    which is our desired upper bound for the right-hand side of \eqref{eq:codim0_prelim}.
    
    Now, we consider the left-hand side of \eqref{eq:codim0_prelim}. We have
    \begin{align*}
        d\Phi - Qd\Theta &= \cos t ~ \big((d\phi -Qd\theta) \oplus (\nu_\phi - Q\nu_\theta)\big) + \sin t ~ \big( (d\nu_\phi - Q d\nu_\theta) \oplus (-\phi + Q\theta)\big)\\
        &:= \cos t ~ T_1 + \sin t ~ T_2
    \end{align*}
    where $T_i: TM \times \mathbb{R} \to \mathbb{R}^{n+2}$ are linear maps which depend only on $x \in M$ (not on $t$). Define
    \begin{equation*}
        \omega_+ := \{ x \in \omega : \langle T_1, T_2 \rangle_{G,e} \geq 0\}, \quad \omega_- := \omega \setminus \omega_+.
    \end{equation*}
    Then,
    \begin{equation*}
        \begin{split}
            \int_{\Omega} &|d\Phi - Q d\Theta|_{G,e}^p dvol_G\\ &= \int_{\omega \times (-h,h)} \left( \cos^2 t ~ |T_1|^2_{G,e} + 2 \cos t \sin t ~ \langle T_1, T_2 \rangle_{G,e}  + \sin^2 t ~ |T_2|^2_{G,e}\right)^{p/2} ~ dvol_G\\
            &\geq \int_{\omega_+ \times(0,h) ~\cup~ \omega_- \times (-h,0)} \left( \cos^2 t ~ |T_1|^2_{G,e} + \sin^2 t ~ |T_2|^2_{G,e}\right)^{p/2} ~ dvol_G\\
            &\geq C \int_\omega \left( |T_1|^2_{G,e} + |T_2|^2_{G,e}\right)^{p/2} ~ dvol_G.
        \end{split}
    \end{equation*}
    In the last line above we integrated out the $t$-direction; $C$ depends only on $h$ and therefore on $\Theta$, $\Omega$, and $S$. Finally, we have
    \begin{align*}
        |T_1|_{G, e}^2 &= |d\phi - Qd\theta|_{g,e}^2 + |\nu_\phi - Q \nu_\theta|_{e}^2\\
        |T_2|_{G,e}^2 &= |d\nu_\phi - Q d\nu_\theta|_{g,e}^2 + |\phi-Q\theta|_{e}^2 
    \end{align*}
    and hence
    \begin{equation} \label{eq:lhs_from_below}
        \int_{\Omega} |d\Phi - Q d\Theta|_{G,e}^p dvol_G \geq C  \big(\|\phi - Q \theta\|_{W^{1,p}(\omega; \mathbb{R}^{n+2})}^p + \|\nu_\phi - Q \nu_\theta\|_{W^{1,p}(\omega;\mathbb{R}^{n+2})}^p\big),
    \end{equation}
    where $C$ depends only on $\Theta$, $M$, $g$, $S$, $n$, and $p$.  
    
    We conclude the proof of Theorem~\ref{thm:rig_main_codim1} from \eqref{eq:codim0_prelim}, \eqref{eq:rhs_Phi_to_phi}, and \eqref{eq:lhs_from_below}.     
\end{proof}

\bigskip

\noindent
{\bf Acknowledgement}. The authors thank Professor Gui-Qiang Chen for insightful discussions. 

The research of SL is supported by NSFC Projects 12331008 $\&$ 12411530065, the Young Elite Scientists Sponsorship Program by CAST 2023QNRC001, National Key Research $\&$ Development Programs 2023YFA1010900 and 2024YFA1014900, Shanghai Rising-Star Program 24QA2703600, Shanghai Qi-Guang Scholarship, and Shanghai Frontiers Science Center of Modern Analysis. The research of IN is supported by a Clarendon scholarship from the University of Oxford.

\medskip
\noindent
{\bf Statement of competing interests}. We declare that there are no conflicts of interest involved.

\medskip
\noindent
{\bf Data Availability Statement}. We declare that no data are associated with this work.

%\medskip
%\noindent
%{\bf AI Statement}. No AI tools in any form have been used in the writing and preparation of this manuscript.

\bibliography{refs}        %use a bibtex bibliography file refs.bib
\bibliographystyle{siam}  %use the plain bibliography style

\end{document}